\theoremstyle{plain}
\newtheorem{theorem}{Theorem}[section]
\newtheorem{lemma}{Lemma}[section]
\theoremstyle{definition}
\newtheorem{example}{Example}[section]
\newtheorem{definition}{Definition}[section]
\newtheorem{remark}{Remark}
\numberwithin{equation}{section}
\begin{document}

\title[]{On the divergence of series of the form $\sum_{k=1}^\infty\|A_k x\|^p$}

\author{Ivan S. Feshchenko}

\maketitle

\begin{abstract}
Let $X$, $Y_k$, $k\geqslant 1$ be normed linear spaces, and $A_{k}:X\to Y_k$, $k\geqslant 1$, be continuous linear operators.
For $p\in[1,\infty]$, define the set
\begin{equation*}
\mathcal{D}_p=\{x\in X\mid (\|A_{1}x\|,\|A_{2}x\|,\ldots)\notin\ell_p\}.
\end{equation*}
We provide sufficient conditions for $\mathcal{D}_p$ to be dense in $X$, where $p\in[1,\infty]$ is fixed,
and for $\bigcap_{p\in[1,p_0)}\mathcal{D}_p$ to be dense in $X$, where $p_0\in(1,\infty]$ is fixed.

We also show that these results can not be improved (in a certain sense).

\textbf{2010 Mathematics Subject Classification.} 40H05, 46B20, 47A05.

\textbf{Key words and phrases.} Banach space, $M$-cotype, continuous linear operator.
\end{abstract}

\section{Formulation of the problem}\label{S:formulation}

For $p\in[1,\infty]$, define $\ell_p$ to be the linear space of all sequences $a=(a_1,a_2,\ldots)$, $a_k\in\mathbb{R}$,
such that
$\sum_{k=1}^\infty|a_k|^p<\infty$ (if $p=\infty$, then $\sup_{k\geqslant 1}|a_k|<\infty$),
endowed with the norm
\begin{equation*}\
\|a\|_p=\left(\sum_{k=1}^\infty|a_k|^p\right)^{1/p}
\end{equation*}
(if $p=\infty$, then $\|a\|_\infty=\sup_{k\geqslant 1}|a_k|$).

Let $X$, $Y_k$, $k\geqslant 1$ be normed linear spaces over a field $\mathbb{K}$ of real or complex numbers,
and $A_{k}:X\to Y_k$, $k\geqslant 1$, be continuous linear operators.
For $p\in[1,\infty]$, define the set
\begin{equation*}
\mathcal{D}_p=\{x\in X\mid (\|A_{1}x\|,\|A_{2}x\|,\ldots)\notin\ell_p\}.
\end{equation*}

Clearly, $\mathcal{D}_{p_2}\subset\mathcal{D}_{p_1}$ if $p_2>p_1$.
Moreover, if $\mathcal{D}_p$ is nonempty, then $\mathcal{D}_p$ is dense in $X$ (see Lemma~\ref{L:lemma_dense_D}).

We provide sufficient conditions for $\mathcal{D}_p$ to be dense in $X$, where $p\in[1,\infty]$ is fixed (see Theorem~\ref{T:theorem1}),
and for $\bigcap_{p\in[1,p_0)}\mathcal{D}_p$ to be dense in $X$, where $p_0\in(1,\infty]$ is fixed (see Theorem~\ref{T:theorem2}).

In Section~\ref{S:sharpness} we show that these results can not be improved (in a certain sense).

To formulate our results we need some auxiliary notions.

\section{$M$-cotype of a normed linear space}\label{S:M-cotype}

Let us recall the definition a normed linear space of $M$-cotype $\rho$~\cite[Definition 4.2.2]{Kadets}
(in this book the definition is given only for real spaces).
Note that the notion of $M$-cotype arises naturally in the study of various geometric properties of Banach spaces
(see, e.g.,~\cite[Sections 4.2, 5.2]{Kadets}).

Let $V$ be a normed linear space over a field $\mathbb{K}$ of real or complex numbers, and $\rho\in[1,\infty)$.

\begin{definition}\label{D:M-cotype}
The space $V$ is said to have \emph{$M$-cotype $\rho$ with constant $C>0$} if
\begin{equation*}
\max\left\{\|\sum_{k=1}^n \varepsilon_k v_k\| \mid \varepsilon_k=\pm 1\right\}\geqslant
C\left(\sum_{k=1}^n\|v_k\|^\rho\right)^{1/\rho}
\end{equation*}
for any $n\in\mathbb{N}$ and $v_1,\ldots,v_n\in V$.
The space $V$ is said to have \emph{$M$-cotype $\rho$} if there exists a constant $C>0$ such that
$V$ has $M$-cotype $\rho$ with constant $C$.
\end{definition}

\begin{remark}\label{R:complex_case}
For the case $\mathbb{K}=\mathbb{C}$ it is natural to give the following definition.
The space $V$ is said to have $M$-cotype $\rho$ if there exists a constant $C>0$ such that
\begin{equation*}
\max\left\{\|\sum_{k=1}^n \alpha_k v_k\| \mid \alpha_k\in\mathbb{C}, |\alpha_k|=1\right\}\geqslant
C\left(\sum_{k=1}^n\|v_k\|^\rho\right)^{1/\rho}
\end{equation*}
for any $n\geqslant 1$ and $v_1,\ldots,v_n\in V$.
This definition is equivalent to the definition above.
This follows from the inequality
\begin{equation}\label{E:ineq_complex_case}
\max\left\{\|\sum_{k=1}^n \alpha_k v_k\| \mid \alpha_k\in\mathbb{C}, |\alpha_k|=1\right\}\leqslant
2
\max\left\{\|\sum_{k=1}^n \varepsilon_k v_k\| \mid \varepsilon_k=\pm 1\right\}
\end{equation}
which is valid for any $v_1,\ldots,v_n\in V$.
Let us prove this inequality.
First, note that the function $\|\sum_{k=1}^n t_k v_k\|$ is convex in $(t_1,\ldots,t_n)\in\mathbb{R}^n$.
It follows that
\begin{equation*}
\|\sum_{k=1}^n t_k v_k\|\leqslant\max\left\{\|\sum_{k=1}^n \varepsilon_k v_k\| \mid \varepsilon_k=\pm 1\right\}
\end{equation*}
for any $t_k\in\mathbb{R}$, $|t_k|\leqslant 1$.
For any $\alpha_k\in\mathbb{C}$, $|\alpha_k|=1$, $1\leqslant k\leqslant n$, we have
\begin{align*}
&\|\sum_{k=1}^n\alpha_k v_k\|=\|\sum_{k=1}^n\mathrm{Re}(\alpha_k)v_k+i\sum_{k=1}^n\mathrm{Im}(\alpha_k)v_k\|\leqslant\\
&\leqslant\|\sum_{k=1}^n\mathrm{Re}(\alpha_k)v_k\|+\|\sum_{k=1}^n\mathrm{Im}(\alpha_k)v_k\|\leqslant
2 \max\left\{\|\sum_{k=1}^n \varepsilon_k v_k\| \mid \varepsilon_k=\pm 1\right\}.
\end{align*}
This proves~\eqref{E:ineq_complex_case}.
\end{remark}

\begin{remark}
A Banach space $V$ is said to have \emph{cotype} $\rho$ (see, e.g. \cite[Section 5.3]{Kadets}) if there exists a constant $C$ such that
\begin{equation*}
\mathbb{E}\|\sum_{k=1}^n r_k v_k\|\geqslant C\left(\sum_{k=1}^n\|v_k\|^\rho\right)^{1/\rho}
\end{equation*}
for any $n\geqslant 1$ and $v_1,\ldots,v_n\in V$, where $r_1,r_2,\ldots$ is a sequence of independent random variables that
take the values $\pm 1$ with the equal probabilities $\mathbb{P}(r_k=1)=\mathbb{P}(r_k=-1)=1/2$, and where $\mathbb{E}$ denotes the expectation.
Clearly, if $V$ has cotype $\rho$, then $V$ has $M$-cotype $\rho$.
\end{remark}

Let us provide some examples.

\begin{example}
Let $V$ be a finite dimensional space.
It is easy to check that $V$ has $M$-cotype $\rho=1$.
\end{example}

\begin{example}
Let $V$ be a Hilbert space.
It is easy to check that $V$ has $M$-cotype $\rho=2$ with constant $C=1$.
\end{example}

\begin{example}\label{EX:M_cotype_measure_space}
Suppose $(T,\mathcal{F},\mu)$ is a measure space, and $s\in[1,\infty)$.
Let $V=L_s(T,\mathcal{F},\mu)$.
Then $V$ has $M$-cotype $\rho=\max\{2,s\}$
(see, e.g.,~\cite[Proof of Theorem 4.2.1]{Kadets}. The proof is given for the case $\mathbb{K}=\mathbb{R}$, but
it is also valid for $\mathbb{K}=\mathbb{C}$).
\end{example}

\section{Main results}\label{S:main_results}

First, we give a sufficient condition for $\mathcal{D}_p$ to be dense in $X$.

To formulate this result, we need a few auxiliary definitions.
For a normed linear space $V$ over a field $\mathbb{K}$,
define $V^*$ to be the linear space of all continuous linear mappings $v^*:V\to\mathbb{K}$,
endowed with the norm
\begin{equation*}
\|v^*\|=\sup_{v\in V, \|v\|=1}|v^*(v)|.
\end{equation*}
For two normed linear spaces $V,W$ define $\mathcal{B}(V,W)$ to be the linear space of all continuous linear operators $A:V\to W$,
endowed with the norm
\begin{equation*}
\|A\|=\sup_{v\in V, \|v\|=1}\|Av\|.
\end{equation*}
In what follows we set $1/0=\infty$ and $1/\infty=0$.

\begin{theorem}\label{T:theorem1}
Let $X$ be a Banach space, and $Y_k$, $k\geqslant 1$ be normed linear spaces.
Let $A_k\in\mathcal{B}(X,Y_k)$, $k\geqslant 1$.

Suppose that $X^*$ has $M$-cotype $\rho\in[1,\infty)$.
Let $p\in[1,\rho/(\rho-1)]$.
Define $r\in[\rho,\infty]$ by
\begin{equation}\label{E:equation_for_r}
\frac{1}{p}-\frac{1}{r}=1-\frac{1}{\rho}.
\end{equation}

If
\begin{equation*}
(\|A_{1}\|,\|A_{2}\|,\ldots)\notin \ell_r,
\end{equation*}
then $\mathcal{D}_p$ is dense in $X$.
\end{theorem}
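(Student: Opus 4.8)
The plan is to argue by contradiction, using the Lemma quoted in Section~\ref{S:formulation}: since $\mathcal{D}_p$ is dense as soon as it is nonempty, it suffices to show that $\mathcal{D}_p=\emptyset$ is impossible. So assume $(\|A_kx\|)_{k\geqslant1}\in\ell_p$ for every $x\in X$. A Baire-category argument converts this pointwise condition into a uniform one. For $p<\infty$ put $S_n(x)=\bigl(\sum_{k=1}^n\|A_kx\|^p\bigr)^{1/p}$ (and $S_n(x)=\max_{1\leqslant k\leqslant n}\|A_kx\|$ for $p=\infty$); each $S_n$ is a continuous seminorm on $X$ and $S_n\leqslant S_{n+1}$. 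By assumption $\sup_nS_n(x)<\infty$ for every $x$, so the closed sets $F_m=\{x:\sup_nS_n(x)\leqslant m\}$, $m\geqslant1$, cover the Banach space $X$; by the Baire category theorem one of them has nonempty interior, and the usual symmetrization-and-homogeneity step produces a constant $C$ with $S_n(x)\leqslant C\|x\|$ for all $n$ and $x$. Letting $n\to\infty$, $\bigl(\sum_{k\geqslant1}\|A_kx\|^p\bigr)^{1/p}\leqslant C\|x\|$ for all $x$ (with a supremum in place of the sum when $p=\infty$).

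Next I would dualize, in order to bring the $M$-cotype of $X^*$ into play. For each $k$ choose $y_k^*\in Y_k^*$ with $\|y_k^*\|\leqslant1$ and $\|A_k^*y_k^*\|\geqslant\tfrac12\|A_k\|$ (possible since $\sup_{\|y^*\|\leqslant1}\|A_k^*y^*\|=\|A_k\|$; take $y_k^*=0$ when $A_k=0$), and set $x_k^*:=A_k^*y_k^*=y_k^*\circ A_k\in X^*$, so that $w_k:=\|x_k^*\|\geqslant\tfrac12\|A_k\|$ and $|x_k^*(x)|\leqslant\|A_kx\|$ for all $x$. For any finitely supported scalar sequence $(a_k)$ and $\|x\|\leqslant1$, Hölder's inequality (with $1/p+1/p'=1$) together with the estimate of the first paragraph gives $\bigl|\sum_k a_kx_k^*(x)\bigr|\leqslant\|(a_k)\|_{p'}\,\bigl\|(\|A_kx\|)_k\bigr\|_p\leqslant C\|(a_k)\|_{p'}$, hence $\bigl\|\sum_k a_kx_k^*\bigr\|_{X^*}\leqslant C\|(a_k)\|_{p'}$. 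Let $C'>0$ be a constant with which $X^*$ has $M$-cotype $\rho$. Applying Definition~\ref{D:M-cotype} to the vectors $b_1x_1^*,\dots,b_nx_n^*$ with $b_k\geqslant0$, and bounding each $\bigl\|\sum_k\varepsilon_kb_kx_k^*\bigr\|$ by the previous inequality with $a_k=\varepsilon_kb_k$ (so that $\|(a_k)\|_{p'}=\|(b_k)\|_{p'}$), I obtain
\begin{equation*}
C'\Big(\sum_{k=1}^n b_k^{\rho}w_k^{\rho}\Big)^{1/\rho}\leqslant\max_{\varepsilon_k=\pm1}\Big\|\sum_{k=1}^n\varepsilon_kb_kx_k^*\Big\|\leqslant C\Big(\sum_{k=1}^n b_k^{p'}\Big)^{1/p'}.
\end{equation*}

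Finally I would exploit this inequality by a scaled choice of $(b_k)$. Relation \eqref{E:equation_for_r} says exactly that $\tfrac1\rho-\tfrac1{p'}=\tfrac1r$, and the hypothesis $p\leqslant\rho/(\rho-1)$ is precisely $p'\geqslant\rho$, hence $r\geqslant\rho$ and $\beta:=r/p'=r/\rho-1\geqslant0$. Fix $N$, restrict to indices $k\leqslant N$ with $w_k>0$, and take $b_k=w_k^{\beta}$ there and $b_k=0$ otherwise; then $(\beta+1)\rho=\beta p'=r$, so the displayed inequality collapses to $C'\bigl(\sum_{k\leqslant N}w_k^{r}\bigr)^{1/\rho}\leqslant C\bigl(\sum_{k\leqslant N}w_k^{r}\bigr)^{1/p'}$, whence $\bigl(\sum_{k\leqslant N}w_k^{r}\bigr)^{1/r}\leqslant C/C'$ (the cases $p=1$, where $p'=\infty$, and $r=\infty$, where $p'=\rho$, need the obvious minor adjustments — read $\sum b_k^{p'}$ as $\sup b_k$, respectively test one index at a time). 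Letting $N\to\infty$ gives $(w_k)_k\in\ell_r$, hence $(\|A_k\|)_k\in\ell_r$, contradicting the hypothesis. Therefore $\mathcal{D}_p\neq\emptyset$, and so $\mathcal{D}_p$ is dense in $X$.

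The genuinely substantive part, as opposed to the routine Baire and Hölder steps, is the pairing of dualization with scaling: one must recognize that the vectors on which to test the $M$-cotype of $X^*$ are the $x_k^*=A_k^*y_k^*$, and then choose the exponent $\beta$ so that the cotype lower bound and the $\ell_{p'}$ upper bound become the two sides of a single inequality in $\sum_k w_k^r$, which then bootstraps to an $\ell_r$-bound on $(w_k)$. Equivalently, the crux is the elementary fact that, for $p'\geqslant\rho$, the diagonal operator $\ell_{p'}\to\ell_{\rho}$ with entries $(w_k)$ has norm $\|(w_k)\|_r$; granting this, everything else is bookkeeping.
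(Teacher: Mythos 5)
Your proof is correct and is essentially the paper's own argument run in contrapositive form: your Baire-category step is Lemma~\ref{L:uniform_boundedness} applied to the operators $B_nx=(A_1x,\ldots,A_nx)\colon X\to\ell_p(Y_1,\ldots,Y_n)$, and your dualization-plus-scaling step (testing the $M$-cotype of $X^*$ on the vectors $b_kA_k^*y_k^*$ with $b_k=w_k^{r/p'}$, with separate treatment of the endpoint cases) is precisely the content of Lemma~\ref{L:main_lemma} applied to $B_n^*$, down to the same choice of exponent and the same case split at $q=\rho$ and $q=\infty$. The paper packages this as $\|B_n\|=\|B_n^*\|\geqslant C\,\|(\|A_1\|,\ldots,\|A_n\|)\|_r\to\infty$ followed by Lemma~\ref{L:unbounded}, which produces a point of $\mathcal{D}_p$ directly rather than by contradiction.
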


Now we give a sufficient condition for $\bigcap_{p\in[1,p_0)}\mathcal{D}_p$ to be dense in $X$.

\begin{theorem}\label{T:theorem2}
Let $X$ be a Banach space, and $Y_k$, $k\geqslant 1$ be normed linear spaces.
Let $A_k\in\mathcal{B}(X,Y_k)$, $k\geqslant 1$.

Suppose that $X^*$ has $M$-cotype $\rho\in[1,\infty)$.
Let $p_0\in(1,\rho/(\rho-1)]$.
Define $r_0\in(\rho,\infty]$ by
\begin{equation*}
\frac{1}{p_0}-\frac{1}{r_0}=1-\frac{1}{\rho}.
\end{equation*}

If
\begin{equation*}
(\|A_{1}\|,\|A_{2}\|,\ldots)\notin \ell_{r}\quad\text{for}\quad r\in[\rho,r_0),
\end{equation*}
then $\bigcap_{p\in[1,p_0)}\mathcal{D}_p$ is dense in $X$.
\end{theorem}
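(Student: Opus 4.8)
The plan is to reduce Theorem~\ref{T:theorem2} to Theorem~\ref{T:theorem1} by a diagonalization over a sequence of values of $p$ approaching $p_0$. First I observe that the hypothesis of Theorem~\ref{T:theorem1} is satisfied for every $p\in[1,p_0)$: given such a $p$, define $r=r(p)\in[\rho,\infty)$ by $1/p-1/r=1-1/\rho$; since $p<p_0$ we get $r<r_0$, so by assumption $(\|A_1\|,\|A_2\|,\ldots)\notin\ell_r$, and Theorem~\ref{T:theorem1} yields that $\mathcal{D}_p$ is dense in $X$. Thus for each $p\in[1,p_0)$ the set $\mathcal{D}_p$ is dense; the issue is that a countable intersection of dense sets need not be dense, so density of each $\mathcal{D}_p$ does not immediately give density of $\bigcap_{p\in[1,p_0)}\mathcal{D}_p$. (Note also that the intersection over all $p\in[1,p_0)$ equals the intersection over any increasing sequence $p_n\uparrow p_0$, since $\mathcal{D}_{p}\subset\mathcal{D}_{p'}$ for $p'<p$, so it suffices to handle a countable family.)

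To overcome this I would not invoke a Baire-category argument (the $\mathcal{D}_p$ are typically not $G_\delta$), but instead inspect the actual construction underlying Theorem~\ref{T:theorem1} and strengthen it. I expect that the proof of Theorem~\ref{T:theorem1} produces, given $\varepsilon>0$ and $x_0\in X$, a point $x$ with $\|x-x_0\|<\varepsilon$ and $(\|A_kx\|)_k\notin\ell_p$; moreover I expect the construction to be a series $x=x_0+\sum_{j}t_j u_j$ built greedily so that one can simultaneously control infinitely many tail sums. The natural approach is: fix $x_0$ and $\varepsilon>0$, choose $p_n\uparrow p_0$, and build a single perturbation $x$ such that $(\|A_kx\|)_k\notin\ell_{p_n}$ for every $n$ at once. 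One runs the Theorem~\ref{T:theorem1} construction in stages: at stage $n$ one ensures, using finitely many new coordinates and a perturbation of size $<\varepsilon/2^{n}$, that a partial sum $\sum_{k\in F_n}\|A_kx\|^{p_n}$ exceeds $n$, with the blocks $F_n$ chosen far enough out (using $(\|A_k\|)\notin\ell_{r(p_n)}$ on a tail) that earlier stages' contributions are essentially frozen. Summability of the perturbations gives convergence to an $x$ with $\|x-x_0\|<\varepsilon$ lying in every $\mathcal{D}_{p_n}$, hence in $\bigcap_{n}\mathcal{D}_{p_n}=\bigcap_{p\in[1,p_0)}\mathcal{D}_p$.

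The main obstacle is bookkeeping: one must verify that the $M$-cotype estimate applied to $X^*$ at stage $n$ (which is what drives the single-$p$ result) can be localized to a block $F_n$ lying in an arbitrarily far tail, so that the stages do not interfere, and that the required perturbation at stage $n$ can be made as small as $\varepsilon/2^n$ uniformly. Concretely, the key quantitative lemma to extract from the proof of Theorem~\ref{T:theorem1} is: if $(\|A_k\|)_{k\geqslant N}\notin\ell_{r(p)}$ then for every $\delta>0$ and every $M$ there is a finite block $F\subset\{N,N+1,\ldots\}$ and a perturbation $u$ with $\|u\|<\delta$ and $\sum_{k\in F}\|A_k(x_0+u)\|^{p}>M$. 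Granting this, the stage-by-stage argument goes through routinely; so the real work is checking that Theorem~\ref{T:theorem1}'s proof does deliver this tail-localized, small-perturbation form, which I anticipate it does because the cotype inequality is applied to finitely many coordinates at a time and one is always free to start from any index $N$.
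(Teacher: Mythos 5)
Your plan is correct, and it converges on the same essential structure as the paper's proof: a decomposition of the indices into finite blocks $F_n=\{m_{n-1}+1,\ldots,m_n\}$, one for each term of a sequence $p_n\uparrow p_0$, with the blocks chosen (using the hypothesis for $r_n=r(p_n)<r_0$, which survives deletion of finitely many indices) so that $\|(\|A_k\|)_{k\in F_n}\|_{r_n}\to\infty$, whence the block operators $B_nx=(A_kx)_{k\in F_n}$, viewed as maps into $\ell_{p_n}(Y_{m_{n-1}+1},\ldots,Y_{m_n})$, satisfy $\|B_n\|\to\infty$ by Lemma~\ref{L:main_lemma} applied to $B_n^*$. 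Where you genuinely diverge is in how the point is produced: the paper applies its uniform-boundedness lemma (Lemma~\ref{L:unbounded}) once to the whole sequence $(B_n)$ to obtain a single $x_0$ with $\sup_n\|B_nx_0\|=\infty$, and gets density for free from Lemma~\ref{L:lemma_dense_D} (the complement of the intersection is an increasing union of linear sets, hence linear), whereas you run an explicit gliding-hump construction near an arbitrary point. Your key quantitative lemma is true and extractable from the paper's ingredients: $\|B_{N,m}\|\to\infty$ as $m\to\infty$ by Lemma~\ref{L:main_lemma}, and then $\max\left(\|B(x_0+u)\|,\|B(x_0-u)\|\right)\geqslant\|Bu\|$ lets you beat any threshold with $\|u\|<\delta$. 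Two small corrections to your bookkeeping: the ``freezing'' of earlier blocks is achieved by making the \emph{later} perturbations small relative to the finitely many $\|A_k\|$, $k\in F_j$, $j\leqslant n$, not by pushing the blocks far out (farness is only needed to keep the tail hypothesis and disjointness); and to pass from $\sum_{k\in F_m}\|A_kx\|^{p_m}\geqslant m$ to divergence of $\sum_k\|A_kx\|^{p_n}$ for fixed $n$ you need the comparison $\|\cdot\|_{p_m}\leqslant\|\cdot\|_{p_n}$ for $p_n\leqslant p_m$, which is the mirror image of the monotonicity step the paper uses at the end of its proof. Overall your route is more constructive but longer; the paper's is shorter because Lemma~\ref{L:unbounded} performs the diagonalization in one stroke.
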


\section{Sharpness of Theorem~\ref{T:theorem1}}\label{S:sharpness}

In this section we show that Theorem~\ref{T:theorem1} is sharp, that is, the condition
\begin{equation*}
(\|A_{1}\|,\|A_{2}\|,\ldots)\notin \ell_{r}
\end{equation*}
is necessary for $\mathcal{D}_p$ to be dense in $X$.
More precisely, in examples below
for any sequence of nonnegative numbers
$a_k$, $k\geqslant 1$, such that $(a_{1},a_{2},\ldots)\in \ell_r$
we construct operators $A_{k}\in\mathcal{B}(X,Y_k)$ such that
$\|A_k\|=a_k$, $k\geqslant 1$, and $\mathcal{D}_p=\varnothing$.

To be specific, we assume that $\mathbb{K}=\mathbb{R}$.

\begin{example}
Let $X=\mathbb{R}$.
Then $X^*=\mathbb{R}$.
Hence, $X^*$ has $M$-cotype $\rho=1$.
We have $\rho/(\rho-1)=\infty$.
Let $p\in[1,\infty]$.
By~\eqref{E:equation_for_r} we get $r=p$.
Suppose that $a_k\geqslant 0$, $k\geqslant 1$, and $(a_{1},a_{2},\ldots)\in\ell_r$.
Define $A_k:\mathbb{R}\to\mathbb{R}$, $k\geqslant 1$, by
\begin{equation*}
A_k x=a_k x,\qquad x\in\mathbb{R}.
\end{equation*}
Clearly, $\|A_k\|=a_k$, $k\geqslant 1$, and $\mathcal{D}_p=\varnothing$.
\end{example}

\begin{example}
Let $X=\ell_s$, where $s\in(1,2]$.
Then $X^*=\ell_t$, where $t\in[2,\infty)$ is defined by $1/s+1/t=1$.
Hence, $X^*$ has $M$-cotype $\rho=t$ (see Example~\ref{EX:M_cotype_measure_space}).
We have $\rho/(\rho-1)=s$.
Let $p\in[1,s]$.
Then $r$ is defined by $1/p-1/r=1/s$.
Suppose that $a_k\geqslant 0$, $k\geqslant 1$, and $(a_{1},a_{2},\ldots)\in\ell_r$.
Define $A_k:\ell_s\to\mathbb{R}$, $k\geqslant 1$, by
\begin{equation*}
A_k x=a_k x_k,\qquad x=(x_1,x_2,\ldots)\in\ell_s.
\end{equation*}
Clearly, $\|A_k\|=a_k$, $k\geqslant 1$.
Let us show that $\mathcal{D}_p=\varnothing$.
Consider any $x=(x_{1},x_{2},\ldots)\in\ell_s$.
We have
\begin{equation*}
\sum_{k=1}^\infty |A_k x|^p=\sum_{k=1}^\infty(a_{k}|x_k|)^p.
\end{equation*}
Since $(a_1,a_2,\ldots)\in\ell_r$, $(|x_1|,|x_2|,\ldots)\in\ell_s$, and $1/r+1/s=1/p$,
we conclude that $(a_1|x_1|,a_2|x_2|,\ldots)\in\ell_p$.
Hence, $\mathcal{D}_p=\varnothing$.
\end{example}

\begin{example}
For $s\in[1,\infty)$, define $L_s=L_s([0,1],dx)$.
Let $X=L_s$, where $s\in[2,\infty)$.
Then $X^*=L_t$, where $t\in(1,2]$ is defined by $1/s+1/t=1$.
Hence, $X^*$ has $M$-cotype $\rho=2$ (see Example~\ref{EX:M_cotype_measure_space}).
We have $\rho/(\rho-1)=2$.
Let $p\in[1,2]$.
Then $r$ is defined by $1/p-1/r=1/2$.
Suppose that $a_k\geqslant 0$, $k\geqslant 1$, and $(a_{1},a_{2},\ldots)\in\ell_r$.

Let $r_k(t)$, $k\geqslant 1$, be the Rademacher functions,
\begin{equation*}
r_k(t)=sign\sin 2^{k}\pi t,\qquad t\in[0,1],\qquad k\geqslant 1.
\end{equation*}
It is well-known that the system $r_k$, $k\geqslant 1$, is an orthonormal system in $L_2$, that is,
$\langle r_k,r_m\rangle_{L_2}=0$ for $k\neq m$ and $\|r_k\|_{L_2}=1$ for $k\geqslant 1$, where
$\langle f(t),g(t)\rangle_{L_2}=\int_{[0,1]}f(t)g(t)\,dt$, $f,g\in L_2$.
Define $A_k\in\mathcal{B}(L_s,\mathbb{R})$, $k\geqslant 1$, by
\begin{equation*}
A_k x=a_k\langle x,r_k\rangle_{L_2}=a_k\int_{[0,1]}x(t)r_k(t)\,dt,\qquad x\in L_s.
\end{equation*}
We claim that $\|A_k\|=a_k$, $k\geqslant 1$.
Indeed, we have $|A_k x|\leqslant a_k\|x\|_{L_1}\leqslant a_k\|x\|_{L_s}$, hence, $\|A_k\|\leqslant a_k$.
Moreover, $A_k r_k=a_k$ and $\|r_k\|_{L_s}=1$, hence $\|A_k\|\geqslant a_k$.
Consequently, $\|A_k\|=a_k$.

Let us show that $\mathcal{D}_p=\varnothing$.
Consider any $x\in L_s$.
We have
\begin{equation*}
\sum_{k=1}^\infty|A_k x|^p=\sum_{k=1}^\infty\left(a_k\left|\langle x,r_k \rangle_{L_2}\right|\right)^p.
\end{equation*}
Since $L_s\subset L_2$ and $r_k$, $k\geqslant 1$, is an orthonormal system in $L_2$, we conclude that
\begin{equation*}
\left(\left|\langle x,r_1\rangle_{L_2}\right|,\left|\langle x,r_2\rangle_{L_2}\right|,\ldots\right)\in\ell_2.
\end{equation*}
Since $(a_1,a_2,\ldots)\in\ell_r$ and $1/2+1/r=1/p$, we conclude that
\begin{equation*}
(a_1\left|\langle x,r_1 \rangle_{L_2}\right|,a_2\left|\langle x,r_2 \rangle_{L_2}\right|,\ldots)\in\ell_p.
\end{equation*}
Hence, $\mathcal{D}_p=\varnothing$.
\end{example}

\section{Auxiliary notions and results}\label{S:aux_results_notions}

\subsection{Auxiliary notions}\label{SS:aux_notions}

Let $n\in\mathbb{N}$ and $s\in[1,\infty]$.
For a vector $a=(a_1,\ldots,a_n)$, $a_k\in\mathbb{R}$, define
\begin{equation*}
\|a\|_s=
\begin{cases}
\left(\sum_{k=1}^n |a_k|^s\right)^{1/s},& \text{if}\quad s\in[1,\infty),\\
\max_{1\leqslant k\leqslant n}|a_k|,&\text{if}\quad s=\infty.
\end{cases}
\end{equation*}

Let $X_k$, $1\leqslant k\leqslant n$, be normed linear spaces.
Define $\ell_s(X_1,\ldots,X_n)$ to be the linear space of all $n$-tuples
\begin{equation*}
x=(x_1,\ldots,x_n),\quad x_k\in X_k,\quad 1\leqslant k\leqslant n,
\end{equation*}
endowed with the norm
\begin{equation*}
\|x\|_s=\|(\|x_1\|,\ldots,\|x_n\|)\|_s.
\end{equation*}
It is easy to check that
\begin{equation*}
(\ell_s(X_1,\ldots,X_n))^*=\ell_t(X_1^*,\ldots,X_n^*),
\end{equation*}
where $t\in[1,\infty]$ is defined by $1/s+1/t=1$.
Note that
\begin{equation*}
x^*(x)=\sum_{k=1}^n x_k^*(x_k)
\end{equation*}
for $x^*=(x_1^*,\ldots,x_n^*)\in\ell_t(X_1^*,\ldots,X_n^*)$ and $x=(x_1,\ldots,x_n)\in\ell_s(X_1,\ldots,X_n)$.

\subsection{Auxiliary results}\label{SS:aux_results}

\begin{lemma}\label{L:lemma_dense_D}
Let $X$ be a normed linear space.
Suppose $\mathcal{D}$ is a nonempty subset of $X$ such that $X\setminus\mathcal{D}$ is a linear set.
Then $\mathcal{D}$ is dense in $X$.
\end{lemma}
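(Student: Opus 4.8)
The plan is to exploit the hypothesis that $X \setminus \mathcal{D}$ is a linear subspace together with nonemptiness of $\mathcal{D}$. Pick any $x_0 \in \mathcal{D}$; since $\mathcal{D}$ is nonempty such a point exists. Write $L = X \setminus \mathcal{D}$, which by assumption is a linear set. The key observation is that for any $y \in L$, the point $x_0 + y$ must lie in $\mathcal{D}$: otherwise $x_0 + y \in L$, and then $x_0 = (x_0 + y) - y \in L$ (using that $L$ is closed under subtraction), contradicting $x_0 \in \mathcal{D}$. Thus the entire affine subspace $x_0 + L$ is contained in $\mathcal{D}$.

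Next I would show that $L$ is dense in $X$, or rather use a dichotomy: either $L = X$, in which case $\mathcal{D} = \varnothing$, contradicting nonemptiness, so this case does not occur; or $L \neq X$. In the latter case, more carefully, I claim $\overline{L} = X$. Indeed, suppose not; then $\overline{L}$ is a proper closed subspace. But actually the cleanest route avoids this: since $L$ is a linear subspace and $\mathcal{D} = X \setminus L \neq \varnothing$, $L$ is a proper subspace. A proper linear subspace need not be dense, so instead I should argue directly that $\mathcal{D}$ is dense. Take any $x \in X$ and any $\varepsilon > 0$; I must find a point of $\mathcal{D}$ within $\varepsilon$ of $x$. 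If $x \in \mathcal{D}$ we are done. If $x \in L$, then consider $x + t x_0$ for small $t \neq 0$: since $L$ is linear and $x_0 \notin L$, we have $x + t x_0 \notin L$ (otherwise $t x_0 = (x + t x_0) - x \in L$, so $x_0 \in L$), hence $x + t x_0 \in \mathcal{D}$. Choosing $|t|$ small enough that $|t|\,\|x_0\| < \varepsilon$ gives the desired approximant.

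So the structure is: (1) fix $x_0 \in \mathcal{D}$; (2) observe $L := X \setminus \mathcal{D}$ is a proper linear subspace not containing $x_0$; (3) for arbitrary $x \in X$ and $\varepsilon > 0$, the point $x + t x_0$ with $0 < |t| < \varepsilon/(1 + \|x_0\|)$ lies in $\mathcal{D}$ and is within $\varepsilon$ of $x$; (4) conclude $\mathcal{D}$ is dense. I do not anticipate any real obstacle here — the argument is elementary linear algebra plus the triangle inequality. The only mild subtlety is handling the case $x \in \mathcal{D}$ separately (trivial) versus $x \in L$, and making sure the perturbation $t x_0$ genuinely escapes $L$, which follows immediately from linearity of $L$ and $x_0 \notin L$.
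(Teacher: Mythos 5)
Your proof is correct and follows essentially the same route as the paper: fix $x_0\in\mathcal{D}$ and perturb an arbitrary $x$ by a small multiple $t x_0$, noting that linearity of $X\setminus\mathcal{D}$ forces $x+t x_0\in\mathcal{D}$ for all (in the paper, all but at most one) $t\neq 0$. The digression about density of $L$ is unnecessary, but the final argument you settle on is exactly the paper's.
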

\begin{proof}
Fix $d\in\mathcal{D}$.
Consider any $x\in X$.
It is easily seen that $|\{\lambda\mid x+\lambda d\notin\mathcal{D}\}|\leqslant 1$.
Since $x+\lambda d\to x$ as $\lambda\to 0$, we conclude that $\mathcal{D}$ is dense in $X$.
\end{proof}

\begin{lemma}\label{L:unbounded}
Let $X$ be a Banach space, and $Y_\gamma$, $\gamma\in\Gamma$ be normed linear spaces.
Let $A_\gamma\in\mathcal{B}(X,Y_\gamma)$, $\gamma\in\Gamma$.
If
\begin{equation*}
\sup_{\gamma\in\Gamma}\|A_\gamma\|=\infty,
\end{equation*}
then there exists $x\in X$ such that
\begin{equation*}
\sup_{\gamma\in\Gamma}\|A_\gamma x\|=\infty.
\end{equation*}
\end{lemma}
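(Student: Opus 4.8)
The statement is essentially the uniform boundedness principle, so the plan is to reduce it to the Banach--Steinhaus theorem. First I would argue by contradiction: suppose that for every $x\in X$ we have $\sup_{\gamma\in\Gamma}\|A_\gamma x\|<\infty$, that is, the family $\{A_\gamma\}_{\gamma\in\Gamma}$ is pointwise bounded on $X$. Since $X$ is a Banach space, the classical Banach--Steinhaus theorem (uniform boundedness principle) applies directly and yields $\sup_{\gamma\in\Gamma}\|A_\gamma\|<\infty$, contradicting the hypothesis. This immediately gives the desired $x$.

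If one prefers a self-contained argument avoiding an explicit appeal to Banach--Steinhaus, I would instead use the Baire category theorem. Put $E_n=\{x\in X\mid \|A_\gamma x\|\leqslant n \text{ for all } \gamma\in\Gamma\}$ for $n\in\mathbb{N}$. Each $E_n$ is closed, being an intersection $\bigcap_{\gamma}\{x\mid\|A_\gamma x\|\leqslant n\}$ of closed sets (each $A_\gamma$ is continuous). Under the contradiction hypothesis $X=\bigcup_{n\geqslant 1}E_n$, so by the Baire category theorem some $E_{n_0}$ has nonempty interior; pick a ball $B(x_0,\delta)\subset E_{n_0}$. For any $x$ with $\|x\|\leqslant 1$ and any $\gamma$, write $\delta x=(x_0+\delta x)-x_0$ and use linearity: $\|A_\gamma(\delta x)\|\leqslant\|A_\gamma(x_0+\delta x)\|+\|A_\gamma x_0\|\leqslant 2n_0$, hence $\|A_\gamma\|\leqslant 2n_0/\delta$ for all $\gamma$, again contradicting $\sup_\gamma\|A_\gamma\|=\infty$.

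There is really no serious obstacle here; the only point requiring a little care is the completeness of $X$, which is exactly the hypothesis in the lemma and is what makes the Baire category argument (equivalently, Banach--Steinhaus) go through. One should also note that $\Gamma$ may be an arbitrary index set, not necessarily countable, but this causes no difficulty: the sets $E_n$ remain closed as arbitrary intersections of closed sets, and the rest of the argument is unaffected.
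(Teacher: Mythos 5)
Your proposal is correct and follows essentially the same route as the paper: the paper reduces the lemma to a generalized uniform boundedness principle (Lemma~\ref{L:uniform_boundedness}) whose proof it declares to be identical to the classical Baire-category argument, which is exactly what you spell out, including the observation that varying codomains $Y_\gamma$ and an arbitrary index set $\Gamma$ cause no difficulty.
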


This lemma is a direct consequence of the following lemma which is a generalization of the principle of uniform boundedness.

\begin{lemma}\label{L:uniform_boundedness}
Let $X$ be a Banach space, and $Y_\gamma$, $\gamma\in\Gamma$ be normed linear spaces.
Let $A_\gamma\in\mathcal{B}(X,Y_\gamma)$, $\gamma\in\Gamma$.
If
\begin{equation*}
\sup_{\gamma\in\Gamma}\|A_\gamma x\|<\infty
\end{equation*}
for any $x\in X$, then
\begin{equation*}
\sup_{\gamma\in\Gamma}\|A_\gamma\|<\infty.
\end{equation*}
\end{lemma}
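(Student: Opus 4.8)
The plan is to adapt the classical Banach--Steinhaus argument, which rests on the Baire category theorem; the fact that the operators $A_\gamma$ have different codomains $Y_\gamma$ causes no essential difficulty.

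First I would introduce the function $\varphi\colon X\to[0,\infty]$ given by $\varphi(x)=\sup_{\gamma\in\Gamma}\|A_\gamma x\|$; the hypothesis asserts precisely that $\varphi(x)<\infty$ for every $x\in X$. For each fixed $\gamma$ the map $x\mapsto\|A_\gamma x\|$ is continuous, being the composition of the bounded linear operator $A_\gamma$ with the norm of $Y_\gamma$; hence $\varphi$, as a pointwise supremum of continuous functions, is lower semicontinuous. Therefore, for each $n\in\mathbb{N}$ the set
\[
E_n=\{x\in X\mid\varphi(x)\leqslant n\}=\bigcap_{\gamma\in\Gamma}\{x\in X\mid\|A_\gamma x\|\leqslant n\}
\]
is closed, and $\bigcup_{n\geqslant 1}E_n=X$.

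Next, since $X$ is complete, the Baire category theorem implies that some $E_N$ has nonempty interior, so it contains a closed ball $\overline{B}(x_0,r)$ with center $x_0\in X$ and radius $r>0$. Then for every $h\in X$ with $\|h\|\leqslant r$ and every $\gamma\in\Gamma$, both $x_0+h$ and $x_0$ belong to $E_N$, so
\[
\|A_\gamma h\|=\|A_\gamma(x_0+h)-A_\gamma x_0\|\leqslant\|A_\gamma(x_0+h)\|+\|A_\gamma x_0\|\leqslant 2N.
\]
Scaling by $r$ and using homogeneity, this yields $\|A_\gamma\|\leqslant 2N/r$ for every $\gamma\in\Gamma$, and hence $\sup_{\gamma\in\Gamma}\|A_\gamma\|\leqslant 2N/r<\infty$, as desired.

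The only step that uses completeness of $X$ — and the only one that is not entirely routine — is the appeal to the Baire category theorem; the rest is bookkeeping. (One could instead derive the lemma from the closed graph theorem applied to the operator $x\mapsto(A_\gamma x)_{\gamma\in\Gamma}$ with values in the $\ell_\infty$-sum of the completions of the spaces $Y_\gamma$, but this merely repackages the same category-theoretic input.)
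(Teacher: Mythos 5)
Your proof is correct and is exactly the classical Baire-category proof of the uniform boundedness principle, adapted verbatim to operators with varying codomains; the paper gives no proof of its own but simply remarks that the argument is identical to the standard one (citing MacCluer, Theorem 3.11), which is precisely what you have written out.
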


The proof is exactly the same as of the principle of uniform boundedness~\cite[Theorem 3.11]{MacCluer}.

The following lemma plays a crucial role in the proof of Theorems~\ref{T:theorem1}, \ref{T:theorem2}.

\begin{lemma}\label{L:main_lemma}
Let $n\in\mathbb{N}$, and $X_1,\ldots,X_n$, $Y$ be normed linear spaces.
Let $A_k\in\mathcal{B}(X_k,Y)$, $1\leqslant k\leqslant n$.

Suppose $Y$ has $M$-cotype $\rho\in[1,\infty)$ with constant $C$.
Let $q\in[\rho,\infty]$.
Define the operator $B:\ell_q(X_1,\ldots,X_n)\to Y$ by
\begin{equation*}
B(x_1,\ldots,x_n)=\sum_{k=1}^n A_k x_k.
\end{equation*}
Then
\begin{equation*}
\|B\|\geqslant C\|(\|A_1\|,\ldots,\|A_n\|)\|_r,
\end{equation*}
where $r\in[\rho,\infty]$ is defined by
\begin{equation*}
\frac{1}{q}+\frac{1}{r}=\frac{1}{\rho}.
\end{equation*}
\end{lemma}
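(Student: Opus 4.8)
The plan is to estimate $\|B\|$ from below by picking, for each choice of signs $\varepsilon=(\varepsilon_1,\ldots,\varepsilon_n)\in\{\pm1\}^n$, a well-chosen input $x(\varepsilon)\in\ell_q(X_1,\ldots,X_n)$ and using the $M$-cotype inequality for $Y$. First I would fix $\delta>0$ and, for each $k$, choose a unit vector $u_k\in X_k$ with $\|A_ku_k\|\geqslant(1-\delta)\|A_k\|$. Put $v_k=A_ku_k\in Y$, so $\|v_k\|\geqslant(1-\delta)\|A_k\|$. For scalars $c_1,\ldots,c_n\geqslant0$ and signs $\varepsilon_k$, consider the input $x(\varepsilon)=(\varepsilon_1c_1u_1,\ldots,\varepsilon_nc_nu_n)$. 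Its norm in $\ell_q(X_1,\ldots,X_n)$ is $\|(c_1,\ldots,c_n)\|_q$, independent of $\varepsilon$, and $Bx(\varepsilon)=\sum_k\varepsilon_kc_kv_k$. Hence
\begin{equation*}
\|B\|\cdot\|(c_1,\ldots,c_n)\|_q\geqslant\max_{\varepsilon}\Bigl\|\sum_{k=1}^n\varepsilon_k(c_kv_k)\Bigr\|\geqslant C\Bigl(\sum_{k=1}^n c_k^\rho\|v_k\|^\rho\Bigr)^{1/\rho}\geqslant C(1-\delta)\Bigl(\sum_{k=1}^n c_k^\rho\|A_k\|^\rho\Bigr)^{1/\rho},
\end{equation*}
using Definition~\ref{D:M-cotype} in the middle step.

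The remaining task is purely a scalar optimization: choose $(c_k)$ to make the ratio $\bigl(\sum c_k^\rho\|A_k\|^\rho\bigr)^{1/\rho}\big/\|(c_k)\|_q$ as large as possible, and recognize the supremum as $\|(\|A_1\|,\ldots,\|A_n\|)\|_r$ with $1/q+1/r=1/\rho$. When $q=\infty$ (so $r=\rho$), take all $c_k=1$ and the bound reads $\|B\|\geqslant C\|(\|A_k\|)\|_\rho$ directly. When $q<\infty$, this is exactly Hölder's inequality with exponents $q/\rho$ and its conjugate: setting $c_k^\rho=\|A_k\|^{r}$ (up to normalization) gives $\sum c_k^\rho\|A_k\|^\rho=\sum\|A_k\|^{r}$ on the one hand, and $\|(c_k)\|_q^\rho=\bigl(\sum\|A_k\|^{rq/\rho}\bigr)^{\rho/q}=\bigl(\sum\|A_k\|^{r}\bigr)^{\rho/q}$ on the other, since $rq/\rho=r$ follows from $1/q+1/r=1/\rho$. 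Dividing yields $\bigl(\sum\|A_k\|^{r}\bigr)^{1/\rho-1/q}=\bigl(\sum\|A_k\|^{r}\bigr)^{1/r}=\|(\|A_k\|)\|_r$. (If some $\|A_k\|=0$ one simply drops those coordinates; if all are zero the inequality is trivial.) Combining, $\|B\|\geqslant C(1-\delta)\|(\|A_1\|,\ldots,\|A_n\|)\|_r$, and letting $\delta\to0$ finishes the proof.

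I expect no serious obstacle here: the one point requiring a little care is the passage from "there exist near-optimal unit vectors $u_k$" to the clean final constant $C$, which is handled by the $1-\delta$ device and a limit; and the bookkeeping of the exponent identity $rq/\rho=r$, which is immediate from the defining relation $1/q+1/r=1/\rho$. The case distinction $q=\infty$ versus $q<\infty$ should be mentioned explicitly so that the Hölder step is only invoked when the exponents are finite.
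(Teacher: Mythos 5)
Your strategy is exactly the paper's: choose near\nobreakdash-optimal unit vectors $u_k$, feed signed, scaled tuples into $B$, invoke the $M$-cotype inequality for $Y$, and then optimize the scalar weights; the paper even uses the same $\delta$-device (with $1/(1+\delta)$ in place of your $1-\delta$, which is immaterial). However, the optimization step as you wrote it is wrong. From $1/q+1/r=1/\rho$ one gets $rq/\rho=r+q$, not $r$, and with your choice $c_k^\rho=\|A_k\|^{r}$ the numerator is $\sum_k c_k^\rho\|A_k\|^\rho=\sum_k\|A_k\|^{r+\rho}$, not $\sum_k\|A_k\|^{r}$; the resulting ratio does not simplify to $\|(\|A_1\|,\ldots,\|A_n\|)\|_r$ and is in general strictly smaller. (Take $\rho=1$, $q=2$, so $r=2$, and $\|A_1\|=1$, $\|A_2\|=2$: your choice gives $9/\sqrt{17}$, whereas the target is $\sqrt{5}$, and $81/17<5$.) The correct substitution --- the one the paper makes --- is $c_k=\|A_k\|^{r/q}$, equivalently $c_k^\rho=\|A_k\|^{r-\rho}$: then $c_k^\rho\|A_k\|^\rho=\|A_k\|^{r}$ and $c_k^q=\|A_k\|^{(r-\rho)q/\rho}=\|A_k\|^{r}$, since $(r-\rho)q/\rho=r$ \emph{does} follow from the defining relation, and the ratio becomes $\bigl(\sum_k\|A_k\|^{r}\bigr)^{1/\rho-1/q}=\|(\|A_k\|)\|_r$ as desired.

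A second, smaller omission: your case split is $q=\infty$ versus $q<\infty$, but the boundary case $q=\rho$ forces $r=\infty$, where no power substitution makes sense. There you should take $c_j=1$ at an index $j$ maximizing $\|A_j\|$ and $c_k=0$ for $k\neq j$, which yields $\|B\|\geqslant C(1-\delta)\max_k\|A_k\|=C(1-\delta)\|(\|A_k\|)\|_\infty$; this is how the paper handles it. With these two repairs your argument is complete and coincides with the paper's proof.
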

\begin{proof}
If $A_k=0$, $1\leqslant k\leqslant n$, then the required assertion is obvious.
Assume that $A_k\neq 0$ for some $k$.

Fix any $\delta>0$.
There exist $x_k\in X_k$, $1\leqslant k\leqslant n$, such that $\|x_k\|=1$ and $\|A_k x_k\|\geqslant\|A_k\|/(1+\delta)$, $1\leqslant k\leqslant n$.
Let $a_k$, $1\leqslant k\leqslant n$, be nonnegative numbers.
Since $Y$ has $M$-cotype $\rho$ with constant $C$, there exist $\varepsilon_k\in\{\pm 1\}$, $1\leqslant k\leqslant n$, such that
\begin{equation*}
\|\sum_{k=1}^n\varepsilon_k a_k A_k x_{k}\|\geqslant C \|(a_1\|A_1 x_1\|,\ldots,\|a_n A_n x_{n}\|)\|_\rho.
\end{equation*}
Hence,
\begin{equation}\label{E:ineq_B}
\|\sum_{k=1}^n A_k(\varepsilon_k a_k x_k)\|\geqslant\frac{C}{1+\delta}\|(a_1\|A_1\|,\ldots,a_n\|A_n\|)\|_\rho.
\end{equation}
Set
\begin{equation*}
x=(\varepsilon_1 a_1 x_1,\ldots,\varepsilon_n a_n x_n)\in\ell_q(X_1,\ldots,X_n).
\end{equation*}
Using~\eqref{E:ineq_B} we get
\begin{equation}\label{E:ineq_norm_B_1}
\|B\|\geqslant\frac{\|Bx\|}{\|x\|_q}\geqslant
\frac{C}{1+\delta}\frac{\|(a_1\|A_1\|,\ldots,a_n\|A_n\|)\|_\rho}{\|(a_1,\ldots,a_n)\|_q}.
\end{equation}
It follows that
\begin{equation}\label{E:ineq_norm_B_2}
\|B\|\geqslant\frac{C}{1+\delta}\|(\|A_1,\ldots,\|A_n\|)\|_r.
\end{equation}

Indeed, if $q\in(\rho,\infty)$, then $r\in(\rho,\infty)$.
Substituting $a_k=\|A_k\|^{r/q}$, $1\leqslant k\leqslant n$, into~\eqref{E:ineq_norm_B_1} we get~\eqref{E:ineq_norm_B_2}.

If $q=\rho$, then $r=\infty$.
Let $\|A_j\|=\max_{1\leqslant k\leqslant n}\|A_k\|$.
Substituting $a_k=0$, $k\neq j$, $a_j=1$ into~\eqref{E:ineq_norm_B_1}, we get~\eqref{E:ineq_norm_B_2}.

If $q=\infty$, then $r=\rho$.
Substituting $a_k=1$, $1\leqslant k\leqslant n$, into~\eqref{E:ineq_norm_B_1}, we get~\eqref{E:ineq_norm_B_2}.

Since $\delta>0$ was arbitrary, the assertion of the lemma follows from~\eqref{E:ineq_norm_B_2}.
\end{proof}

\section{Proof of Theorem~\ref{T:theorem1}}\label{S:proof_1}

It is sufficient to prove that $\mathcal{D}_p$ is nonempty (see Lemma~\ref{L:lemma_dense_D}).

For $n\geqslant 1$, define the operator $B_n:X\to\ell_p(Y_1,\ldots,Y_n)$ by
\begin{equation*}
B_n x=(A_1 x,\ldots,A_n x).
\end{equation*}
Define $q$ by $1/p+1/q=1$.
Then $B_n^*:\ell_q(Y_1^*,\ldots,Y_n^*)\to X^*$.
It is easy to check that
\begin{equation*}
B_n^*(y_1^*,\ldots,y_n^*)=\sum_{k=1}^n A_k^* y_k^*.
\end{equation*}
Since $1/p-1/r=1-1/\rho$ and $1/p+1/q=1$, we conclude that $1/q+1/r=1/\rho$.
Since $X^*$ has $M$-cotype $\rho$, there exists a constant $C>0$ such that $X^*$ has $M$-cotype $\rho$ with constant $C$.
From Lemma~\ref{L:main_lemma} it follows that
\begin{equation*}
\|B_n^*\|\geqslant C\|(\|A_1^*\|,\ldots,\|A_n^*\|)\|_r.
\end{equation*}
It is well-known that $\|A^*\|=\|A\|$ for any $A\in\mathcal{B}(V,W)$, where $V,W$ are normed linear spaces.
Hence,
\begin{equation*}
\|B_n\|\geqslant C\|(\|A_1\|,\ldots,\|A_n\|)\|_r.
\end{equation*}
Since $(\|A_1\|,\|A_2\|,\ldots)\notin\ell_r$, we conclude that $\|B_n\|\to\infty$ as $n\to\infty$.
From Lemma~\ref{L:unbounded} it follows that there exists $x_0\in X$ such that the sequence $\|B_n x_0\|$ is unbounded.
Clearly, $x_0\in\mathcal{D}_p$.
This completes the proof.

\section{Proof of Theorem~\ref{T:theorem2}}\label{S:proof_2}

It is sufficient to prove that $\bigcap_{p\in[1,p_0)}\mathcal{D}_p$ is nonempty (see Lemma~\ref{L:lemma_dense_D}).

Fix an increasing sequence $p_n\in[1,p_{0})$, $n\geqslant 1$, such that $p_n\to p_0$ as $n\to\infty$.
Let $0=m_0<m_1<m_2<\ldots$ be an increasing sequence of nonnegative integers.
For $n\geqslant 1$ define the operator $B_n:X\to \ell_{p_n}(Y_{m_{n-1}+1},\ldots,Y_{m_n})$ by
\begin{equation*}
B_n x=(A_{m_{n-1}+1}x,\ldots,A_{m_n}x).
\end{equation*}
For $n\geqslant 1$, define $q_n$ by $1/p_n+1/q_n=1$.
Since $p_n<p_0\leqslant\rho/(\rho-1)$, we conclude that $1/p_n>1-1/\rho$, $1/\rho>1/q_n$, $q_n>\rho$.
Clearly, $B_n^*:\ell_{q_n}(Y_{m_{n-1}+1}^*,\ldots,Y_{m_n}^*)\to X^*$.
It is easy to check that
\begin{equation*}
B_n^*(y_{m_{n-1}+1}^*,\ldots,y_{m_n}^*)=\sum_{k=m_{n-1}+1}^{m_n}A_k^* y_k^*.
\end{equation*}
Since $X^*$ has $M$-cotype $\rho$, there exists a constant $C>0$ such that $X^*$ has $M$-cotype $\rho$ with constant $C$.
From Lemma~\ref{L:main_lemma} it follows that
\begin{equation*}
\|B_n^*\|\geqslant C\|(\|A_{m_{n-1}+1}^*\|,\ldots,\|A_{m_n}^*\|)\|_{r_n},
\end{equation*}
where $r_{n}$ is defined by $1/q_n+1/r_n=1/\rho$.
It follows that
\begin{equation}\label{E:th_2_ineq_norm_B_n}
\|B_n\|\geqslant C\|(\|A_{m_{n-1}+1}\|,\ldots,\|A_{m_n}\|)\|_{r_n}.
\end{equation}
Since $1/p_n+1/q_n=1$, we have $1/p_n-1/r_n=1-1/\rho$.
From $p_n<p_0$ it follows that $r_n<r_0$, $n\geqslant 1$.
Since $(\|A_1\|,\|A_2\|,\ldots)\notin\ell_r$ for any $r\in[\rho,r_0)$, we can choose a sequence $m_n$, $n\geqslant 1$, so that
\begin{equation*}
\|(\|A_{m_{n-1}+1}\|,\ldots,\|A_{m_n}\|)\|_{r_n}\to\infty\quad\text{as}\quad n\to\infty.
\end{equation*}
From~\eqref{E:th_2_ineq_norm_B_n} it follows that $\|B_n\|\to\infty$ as $n\to\infty$.
By Lemma~\ref{L:unbounded}, there exists $x_0\in X$ such that the sequence $\|B_n x_0\|$, $n\geqslant 1$, is unbounded.

We claim that
\begin{equation*}
x_0\in\bigcap_{p\in[1,p_0)}\mathcal{D}_p.
\end{equation*}
Indeed, suppose that $\sum_{k=1}^\infty\|A_k x_0\|^p<\infty$ for some $p\in[1,p_0)$.
There exists a number $M$ such that $\sum_{k=M}^\infty\|A_k x_0\|^p<1$.
Let $N$ be such that $m_{N-1}+1\geqslant M$ and $p_N>p$.
For any $n\geqslant N$ we have
\begin{equation*}
\sum_{k=m_{n-1}+1}^{m_n}\|A_k x_0\|^{p_n}<1.
\end{equation*}
Hence, $\|B_n x_0\|<1$ for $n\geqslant N$, a contradiction.
Thus, $x_0\in\bigcap_{p\in[1,p_0)}\mathcal{D}_p$.
This completes the proof.

Ivan Sergeevych Feshchenko,

Taras Shevchenko national university of Kyiv,

Faculty of Mechanics and Mathematics,

Department of Mathematical Analysis,

Assistant professor.

str. Volodymyrska 64, Kyiv, 01033, Ukraine.

Home address: Obolonsky avenue 11, apt.197, Kyiv, 04205, Ukraine.

Phone:+38 044 4133105, +38 095 5931253.

e-mail: ivanmath007@gmail.com

\end{document}